\let\expandafter\oldproof\csname\string\proof\endcsname
\let\oldendproof\endproof
\renewenvironment{proof}[1][\proofname]{%
	\oldproof[\bf #1]%
}{\oldendproof}
\theoremstyle{plain}
\newtheorem{theorem}{Theorem}
\newtheorem{lemma}{Lemma}[section]
\newtheorem{claim}[lemma]{Claim}
\definecolor{RED}{rgb}{1,0,0}\definecolor{BLUE}{rgb}{0,0,1} 
\title{Hypergraph removal with polynomial bounds}
\author{Lior Gishboliner \thanks{Department of Mathematics, ETH, Z\"urich, Switzerland. Email: lior.gishboliner$@$math.ethz.ch. Research supported by SNSF grant 200021\_196965.} \and Asaf Shapira \thanks{School of Mathematics, Tel Aviv University, Tel Aviv 69978, Israel. Email: asafico$@$tau.ac.il. Supported in part by ISF Grant 1028/16, ERC Consolidator Grant 863438 and NSF-BSF Grant 20196.}}
\begin{document}
\date{}
\maketitle

\begin{abstract}
Given a fixed $k$-uniform hypergraph $F$, the $F$-removal lemma states that
every hypergraph with few copies of $F$ can be made $F$-free by the removal
of few edges. Unfortunately, for general $F$, the constants involved are given by incredibly fast-growing
Ackermann-type functions. It is thus natural to ask for which $F$ one can prove removal lemmas
with polynomial bounds. One trivial case where such bounds can be obtained is when $F$
is $k$-partite. Alon proved that when $k=2$ (i.e. when dealing with graphs), only bipartite graphs
have a polynomial removal lemma. Kohayakawa, Nagle and R\"odl conjectured in 2002 that Alon's result can be
extended to all $k>2$, namely, that the only $k$-graphs $F$ for which the hypergraph removal lemma has polynomial bounds are the trivial cases when $F$ is $k$-partite. In this paper we prove this conjecture.
\end{abstract}

\section{Introduction}\label{sec:intro}

The hypergraph removal lemma is one of the most important results of extremal combinatorics.
It states that for every fixed integer $k$, $k$-uniform hypergraph ($k$-graph for short) $F$ and positive $\varepsilon$,
there is $\delta=\delta(F,\varepsilon) > 0$ so that if $G$ is an $n$-vertex $k$-graph
with at least $\varepsilon n^k$ edge-disjoint\footnote{The lemma's assumption is sometimes stated as $G$ being
$\varepsilon$-far from $F$-freeness, meaning that one should remove at least $\varepsilon n^k$ edges to turn $G$ into an $F$-free hypergraph. It is easy to see that up to constant factors, this notion is equivalent to having $\varepsilon n^k$ edge-disjoint copies of $F$.} copies of $F$, then $G$ contains $\delta n^{v(F)}$ copies
of $F$. This lemma was first conjectured by Erd\H{o}s, Frankl and R\"odl \cite{EFR} as an alternative approach for proving Szemer\'edi's theorem \cite{SzThm}. The quest to proving this lemma, which involved the development of the hypergraph
extension of Szemer\'edi's regularity lemma \cite{Sz}, took more than two decades, culminating
in several proofs, first by Gowers \cite{Gowers07} and R\"odl--Skokan--Nagle--Schacht \cite{NRS,RS3} and later by Tao \cite{Tao}.
For the sake of brevity, we refer the reader to \cite{Rodl14} for more background and references on the subject.

While the hypergraph removal lemma has far-reaching qualitative applications, its main drawback is that it supplies very weak quantitative bounds.
Specifically, for a general $k$-graph $F$, the function $1/\delta(F,\varepsilon)$ grows like the $k^{th}$ Ackermann function.
It is thus natural to ask for which $k$-graphs $F$ one can obtain more sensible bounds.
Further motivation for studying such questions comes from the area of graph property testing \cite{Goldreich1}, where
graph and hypergraph removal lemmas are used to design fast randomized algorithms.

Suppose first that $k=2$. In this case it is easy to see that if $F$ is bipartite then $\delta(F,\varepsilon)$
grows polynomially with $\varepsilon$. Indeed, if $G$ has $\varepsilon n^2$ edge-disjoint copies of $F$ then
it must have at least $\varepsilon n^2$ edges, which implies by the well-known K\"ov\'ari--S\'os--Tur\'an theorem \cite{KST}, that
$G$ has at least $\mbox{poly}(\varepsilon)n^{v(F)}$ copies of $F$. In the seminal paper of Ruzsa and Szemer\'edi \cite{RSz} in which
they proved the first version of the graph removal lemma, they also proved that when $F$ is the triangle $K_3$, the removal
lemma has a super-polynomial dependence on $\varepsilon$. A highly influential result of Alon \cite{Alon} completed the picture by extending
the result of \cite{RSz} to all non-bipartite graphs $F$.

Moving now to general $k > 2$, it is natural to ask for which $k$-graphs the function $\delta(F,\varepsilon)$ depends
polynomially on $\varepsilon$. Let us say that in this case the {\em $F$-removal lemma is polynomial}.
It is easy to see that like in the case of graphs, the $F$-removal lemma is polynomial whenever $F$
is $k$-partite. This follows from Erd\H{o}s's \cite{Erdos64} well-known hypergraph extension of the K\"ov\'ari--S\'os--Tur\'an theorem.
Motivated by Alon's result \cite{Alon} mentioned above, Kohayakawa, Nagle and R\"odl \cite{KNR} conjectured in 2002 that
the $F$-removal lemma is polynomial if and only if $F$ is $k$-partite. They further proved
that the $F$-removal lemma is not polynomial when $F$ is the complete $k$-graph on $k+1$ vertices.
Alon and the second author \cite{AS} proved that a more general condition guarantees that
the $F$-removal lemma is not polynomial, but fell short of covering all non-$k$-partite $k$-graphs.
In the present paper we complete the picture, by fully resolving the problem of Kohayakawa, Nagle and R\"odl \cite{KNR}.

\begin{theorem}\label{theomain}
For every $k$-graph $F$, the $F$-removal lemma is polynomial if and only if $F$ is $k$-partite.
\end{theorem}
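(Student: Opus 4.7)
The ``if'' direction of Theorem~\ref{theomain} is the standard K\"ov\'ari--S\'os--Tur\'an-type argument already sketched in the introduction, so the task is the converse: for every non-$k$-partite $k$-graph $F$, the function $\delta(F,\varepsilon)$ must decay faster than any polynomial in $\varepsilon$. Concretely, the plan is to exhibit, for arbitrarily small $\varepsilon$, an $n$-vertex $k$-graph $G$ containing $\varepsilon n^k$ edge-disjoint copies of $F$ but only $\varepsilon^{\omega(1)} n^{v(F)}$ copies of $F$ in total. This super-polynomial gap between the ``local'' and ``global'' count of $F$-copies is the hallmark Ruzsa--Szemer\'edi phenomenon, and the strategy is to lift it to the hypergraph setting in a way sensitive to how $F$ fails to be $k$-partite.

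First I would look for a tractable combinatorial witness of non-$k$-partiteness. Being non-$k$-partite means $V(F)$ cannot be properly $k$-coloured so that each edge receives all $k$ colours; I would like a more quantitative version, an ``odd substructure'' playing the role of an odd cycle in the $k=2$ case. The goal of this preliminary step is to isolate, inside $F$, the exact obstruction to be exploited arithmetically. A natural attempt is to pass to a minimal non-$k$-partite subhypergraph of $F$, or to the ``link'' $(k{-}1)$-graph at some vertex, and then argue that a counterexample for the smaller object can be blown up to one for $F$ itself.

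The core of the construction would then be a Behrend-type lift. Fix a dense set $A\subseteq [N]$ avoiding nontrivial solutions to a linear equation $\mathcal{E}$ chosen based on the obstruction above; Behrend's construction yields $|A|\ge N^{1-o(1)}$ while $A$ has essentially only trivial $\mathcal{E}$-solutions. The hypergraph $G$ is built on a vertex set consisting of a constant number of copies of $[N]$, with edges indexed by tuples $(a,x_1,\dots,x_t)\in A\times[N]^t$ that mimic the incidence structure of $F$. On the one hand, each such tuple naturally produces a copy of $F$ in $G$, and a routine ``coordinate'' argument shows that these copies are edge-disjoint, yielding roughly $|A|\cdot N^t\approx\varepsilon n^k$ of them. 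On the other hand, every copy of $F$ in $G$ must pin down a solution of $\mathcal{E}$ in $A$, so the total number of copies is bounded by the number of $\mathcal{E}$-solutions in $A$ times $N^{t'}$, which by Behrend is super-polynomially smaller than $\varepsilon^{-1}$.

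The main obstacle, and where I expect the real work to lie, is producing such a construction for \emph{every} non-$k$-partite $F$, not merely for the restricted families treated in \cite{AS}. The difficulty is that arbitrary non-$k$-partite $F$ share no obvious common ``bad substructure'', so one must argue structurally about $F$ itself, and one must rule out unwanted copies of $F$ arising from ``skew'' embeddings of $V(F)$ into $G$. Here the failure of $k$-partiteness must finally come in: one needs to show that any map $V(F)\to V(G)$ either respects the intended parametrisation (and hence forces an $\mathcal{E}$-equation on $A$, of which there are few) or else cannot be an embedding at all, because it would place two $F$-vertices of distinct colour classes into the same copy of $[N]$. Turning the non-$k$-partiteness of $F$ into this rigidity statement for embeddings, uniformly over all such $F$, is in my view the heart of the proof.
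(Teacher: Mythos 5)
Your outline correctly reproduces the general Ruzsa--Szemer\'edi/Behrend template (many edge-disjoint copies via a progression-free set, few copies overall because each copy forces an equation), and it correctly locates the difficulty: finding, for \emph{every} non-$k$-partite $F$, a ``bad substructure'' to encode arithmetically, and proving a rigidity statement ruling out skew embeddings. But at exactly that point you stop and declare it the heart of the proof, so there is a genuine gap: the proposal contains no mechanism that works for all non-$k$-partite $F$. Your suggested preliminary moves --- passing to a minimal non-$k$-partite subhypergraph, or to a link --- are essentially the routes of \cite{KNR} and \cite{AS}, which is precisely where those papers fell short of the full conjecture. Moreover, the rigidity you hope to extract ``from non-$k$-partiteness'' is not how the paper gets it: skew embeddings are killed not by a colouring argument but by first reducing to the case where $F$ is a core (Claim \ref{claim:reduction to core}, via a blow-up transfer of the lower bound) and then using the fact that any homomorphism from a core to itself is an isomorphism (Claim \ref{claim:core}); since the constructed host $H$ is homomorphic to $F$, every copy of $F$ in $H$ is automatically ``canonical'', i.e.\ respects the vertex classes, and only then does it pin down an equation over the Behrend set.

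The second missing idea is the actual witness of non-$k$-partiteness, which the paper finds not in $F$ itself but in its $2$-shadow. For a core $F$ the dichotomy is: either the shadow contains a cycle meeting every edge of $F$ in at most two vertices (in particular an induced cycle of length at least $4$), and then Alon's graph construction is lifted to a $k$-graph construction by a probabilistic extension lemma (Lemmas \ref{lem:extension}, \ref{lem:graph_RS}, \ref{lem:induced cycle}); or the shadow is chordal, hence perfect, and since it is not $k$-colourable it contains a clique on $k+1$ vertices, which yields a minimal subset $I$ with $(\partial_{|I|-1}F)[I]\cong K_{|I|}^{(|I|-1)}$ not contained in any edge, handled by a Kohayakawa--Nagle--R\"odl-type construction (Lemmas \ref{lem:k-simplex_RS}, \ref{lem:simplex}). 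This chordality/perfectness case split, together with the core rigidity, is what makes the argument uniform over all non-$k$-partite $F$; neither ingredient appears in your proposal, so as written it does not constitute a proof of the ``only if'' direction.
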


As a related remark, we note that for $k \geq 3$, the analogous problem for the {\em induced} $F$-removal lemma (that is, a characterization of $k$-graphs for which the induced $F$-removal lemma has polynomial bounds) was recently settled in \cite{GT21}, following a nearly-complete characterization given in \cite{AS}.

Before proceeding, let us recall the notion of a {\em core}, which plays an important role in the proof of Theorem \ref{theomain}. Recall that for a pair of $k$-graphs $F_1,F_2$, a homomorphism from $F_1$ to $F_2$ is a map $\varphi : V(F_1) \rightarrow V(F_2)$ such that for every $e \in E(F_1)$ it holds that $\{\varphi(x) : x\in e\} \in E(F_2)$. The {\em core} of a $k$-graph $F$ is the smallest (with respect to the number of vertices) subgraph of $F$ to which there is a homomorphism from $F$. It is not hard to show that the core of $F$ is unique up to isomorphism\footnote{Indeed, suppose that $F_1,F_2$ are both cores of $F$. Then $F_1$ is homomorphic to $F_2$ (by taking a homomorphism from $F$ to $F_2$ and restrincting it to $V(F_1)$) and similarly $F_2$ is homomorphic to $F_1$. Also, by the minimality of a core, both homomorphisms $\varphi : F_1 \rightarrow F_2$ and $\psi : F_2 \rightarrow F_1$ must be surjective. Indeed, if e.g.~$\varphi$ is not surjective, then by composing $\varphi$ with a homomorphism from $F$ to $F_1$, we get a homomorphism from $F$ to a proper subgraph of $F_2$, a contradiction. So $|V(F_1)| = |V(F_2)|$ and $\varphi,\psi$ are in fact bijections. It follows that $F_1,F_2$ are isomorphic.}.
Also, note that the core of a $k$-graph $F$ is a single edge if and only if $F$ is $k$-partite. In particular, if a $k$-graph is not $k$-partite, then neither is its core. We say that $F$ is a core if it is the core of itself.


Alon’s \cite{Alon} approach relies on the fact that the core of every non-bipartite graph has a cycle.
It is then natural to try and prove Theorem \ref{theomain} by finding analogous sub-structures in the core of every non-$k$-partite $k$-graphs. Indeed, this was the approach taken in \cite{AS, KNR}.
The main novelty in this paper, and what allows us to handle all cases of Theorem \ref{theomain}, is that instead of directly inspecting the $k$-graph $F$, we study the properties of a certain graph associated with $F$. More precisely, given a $k$-graph $F=(V,E)$, we consider its {\em $2$-shadow}, which is the graph on the same vertex set $V$ in which
$\{u,v\}$ is an edge if and only if $u,v$ belong to some $e \in E$.
The proof of Theorem \ref{theomain} relies on the two lemmas described \nolinebreak below.



\begin{lemma}\label{lemma1}
	Suppose a $k$-graph $F$ is a core and its $2$-shadow contains an induced cycle of length at least 4. Then
	the $F$-removal lemma is not polynomial.\footnote{The proof of this lemma also works if the $2$-shadow of $F$ contains a triangle $x,y,z$ and $|e \cap \{x,y,z\}| \leq 2$ for every $e \in E(F)$, but we will not require this; in fact, this case follows from Lemma \ref{lem:simplex}.} 
\end{lemma}

Note that this is a generalization of Alon's result mentioned above since the $2$-shadow of every non-bipartite graph $F$ (which
is of course $F$ itself in this case) must contain a cycle. Our second lemma is the following.

\begin{lemma}\label{lemma2}
Suppose a $k$-graph $F$ is a core and its $2$-shadow contains a clique of size $k+1$. Then
the $F$-removal lemma is not polynomial.
\end{lemma}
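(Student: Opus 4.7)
The plan is to adapt the Kohayakawa--Nagle--R\"odl (KNR) Behrend-type construction --- which provides the non-polynomial lower bound for $K_{k+1}^{(k)}$-removal --- to an arbitrary core $F$ whose $2$-shadow contains a $K_{k+1}$. The goal is to construct, for arbitrarily small $\varepsilon > 0$, a $k$-graph $G$ that is $\Omega_F(\varepsilon)$-far from $F$-free but contains only $O_F(g(\varepsilon)) \cdot v(G)^{v(F)}$ copies of $F$, where $g(\varepsilon) = 1/\varepsilon^{\omega(1)}$.

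I would begin by invoking the KNR construction, which yields a $k$-graph $H$ on $n$ vertices that is $\varepsilon$-far from $K_{k+1}^{(k)}$-free but contains only $g(\varepsilon) n^{k+1}$ copies of $K_{k+1}^{(k)}$. Write $S = \{v_1, \ldots, v_{k+1}\}$ for the vertices of the $K_{k+1}$ in the $2$-shadow of $F$ and $R = V(F) \setminus S$. I would build $G$ as a blow-up of $F$ with parts $\{W_v\}_{v \in V(F)}$, setting $W_{v_i} = V(H)$ for each $v_i \in S$ and taking $W_u$ to be a fresh vertex set of size $n$ for each $u \in R$. For each edge $e \in E(F)$, include in $G$ the partite $k$-tuples across $\{W_v\}_{v \in e}$ whose $S$-projection can be completed inside $V(H)^{k+1}$ to a $K_{k+1}^{(k)}$-copy in $H$. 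The far-from-$F$-freeness follows by lifting each edge-disjoint $K_{k+1}^{(k)}$-copy in $H$ to many copies of $F$ in $G$, obtained by placing the $S$-vertices of $F$ at the chosen $K_{k+1}^{(k)}$-copy and the $R$-vertices arbitrarily.

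For the upper bound on copies of $F$ --- the heart of the argument --- I would leverage that $F$ is a core. Given an embedding $\phi : V(F) \to V(G)$ of $F$, compose with the canonical projection $\pi : V(G) \to V(F)$ to obtain $\psi := \pi \circ \phi$. By construction, every $G$-edge projects to an $F$-edge, so $\psi$ is a homomorphism $F \to F$; since $F$ is a core, $\psi$ is an automorphism, and up to the constant $|\mathrm{Aut}(F)|$ one may assume $\phi(v) \in W_v$ for all $v$. The edges of $F$ meeting $S$ then constrain the $(k+1)$-tuple $(\phi(v_1), \ldots, \phi(v_{k+1})) \in V(H)^{k+1}$, ideally forcing it to span a $K_{k+1}^{(k)}$-copy in $H$; this restricts it to $g(\varepsilon) n^{k+1}$ choices, and the remaining $|R|$ free coordinates contribute a factor of $n^{|R|}$, giving the desired super-polynomial count.

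The main obstacle lies in the last step: forcing the $(k+1)$-tuple in $V(H)^{k+1}$ to span a $K_{k+1}^{(k)}$-copy. Since the hypothesis only provides a clique in the $2$-shadow of $F$ --- not a sub-$k$-graph isomorphic to $K_{k+1}^{(k)}$ --- any single edge of $F$ meeting $S$ in $j$ vertices only constrains a particular $j$-subset of the tuple, and in the worst case (e.g., when no edge of $F$ lies entirely in $S$) the per-edge constraints are only partial. One must argue that the union of constraints across all $F$-edges meeting $S$ pins down the whole $(k+1)$-tuple to a $K_{k+1}^{(k)}$-copy in $H$, or else strengthen the edge-placement rule to achieve this while preserving far-from-$F$-freeness. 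Balancing these two is what makes Lemma \ref{lemma2} go beyond both the cycle-based argument of Lemma \ref{lemma1} and the elementary blow-up idea.
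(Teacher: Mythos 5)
There is a genuine gap, and it is exactly the one you flag in your final paragraph without resolving: your construction and your counting both need every $k$-subset of the clique $X$ (your $S$) to be covered by an edge of $F$, i.e.\ $(\partial_k F)[X] \cong K_{k+1}^{(k)}$, but the hypothesis only gives a clique in the $2$-shadow, so possibly no $k$-subset (indeed no $j$-subset for any $j \geq 3$) of $X$ is contained in an edge. In that case the constraints imposed on the tuple $(\phi(v_1),\dots,\phi(v_{k+1}))$ by the edges of $F$ are only ``partial projections extend to a $K_{k+1}^{(k)}$-copy of $H$,'' and these do not pin the tuple to a single copy: the number of admissible tuples can be as large as roughly $n^{k+1}/e^{O(\sqrt{\log n})}$ rather than $O(n^{k})$. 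That loss is fatal, because a Behrend-type factor is exactly the size of the density $\varepsilon$ of the edge-disjoint copies, so you would only get $\delta$ polynomially related to $\varepsilon$; to conclude non-polynomiality one needs the copy count to drop by a full power of $n$ (at most $n^{v(F)-1}$ copies), which is what the paper achieves. A secondary issue: lifting each $K_{k+1}^{(k)}$-copy by placing the $R$-vertices ``arbitrarily'' does not by itself give edge-disjoint $F$-copies, since $F$-edges meeting $S$ in few vertices can coincide across different lifts; the paper needs a randomized extension lemma (Lemma \ref{lem:extension}) with a quantitative disjointness guarantee to handle this.

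The missing idea, which is the heart of the paper's proof of Lemma \ref{lemma2}, is to not work with $K_{k+1}^{(k)}$ at all: take $I \subseteq X$ to be a \emph{smallest} subset of $X$ not contained in any edge of $F$ (well-defined since $|X| = k+1$), and set $s = |I|$, so $3 \leq s \leq k+1$. By minimality, every $(s-1)$-subset of $I$ lies in an edge, hence $(\partial_{s-1}F)[I] \cong K_s^{(s-1)}$, while $|e \cap I| \leq s-1$ for every edge $e$. One then runs the Behrend/KNR-type construction at uniformity $s-1$ (Lemma \ref{lem:k-simplex_RS}), extends the $(s-1)$-disjoint simplex copies randomly to full canonical copies of $F$ via Lemma \ref{lem:extension} with $\ell = s-1$ (the condition $|e \cap I| \leq s-1$ is what makes the placed copies edge-disjoint), and uses the core property (Claim \ref{claim:core}) to show every $F$-copy in $H$ is canonical and its $I$-coordinates span a copy of $K_s^{(s-1)}$ in the base construction — now the constraints do pin things down, because \emph{all} $(s-1)$-subsets of $I$ are covered by edges of $F$. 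This yields at most $n^{v(F)-1}$ copies against $n^k/e^{O(\sqrt{\log n})}$ edge-disjoint ones, and a separate boosting step (Lemma \ref{lem:blowup}, a blow-up of the whole construction) converts this single statement into the failure of polynomial bounds for every $\varepsilon$. Your skeleton (blow-up along $F$, project, use that homomorphisms of a core to itself are automorphisms, count) matches the paper's, but without the passage to the minimal uncovered set $I$ and the change of uniformity the argument does not go through.
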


Note that this is a generalization of the result of Kohayakawa, Nagle and R\"odl \cite{KNR} mentioned above since the $2$-shadow of
the complete $k$-graph on $k+1$ vertices is a clique of size $k+1$.

The proofs of Lemmas \ref{lemma1} and \ref{lemma2} appear in
Section \ref{sec:proofs}, but let us first see why they together allow us to handle all non-$k$-partite $k$-graphs, thus proving Theorem \ref{theomain}.


\begin{proof}[Proof of Theorem \ref{theomain}]
The ``if" part was discussed above.
As for the ``only if" part, suppose $F$ is a $k$-graph which is not $k$-partite and assume first that $F$ is a core.
Let $G$ denote the $2$-shadow of $F$.
If $G$ contains an induced cycle of length at least $4$, then the result follows from Lemma \ref{lemma1}.
Suppose then that $G$ contains no such cycle, implying that $G$ is chordal.
Since $F$ is not $k$-partite, $G$ is not $k$-colorable. Since $G$ is assumed to be chordal, and chordal graphs are well-known to be perfect, this means that $G$ has a clique of size $k+1$. Hence, the result follows from Lemma \ref{lemma2}.

To prove the result when $F$ is not necessarily a core, one just needs to observe that if $F'$ is the core of $F$, then $(i)$ as noted earlier, $F'$ is not $k$-partite, and $(ii)$ since the $F'$ removal lemma is not polynomial (by the previous paragraph), then neither is the $F$-removal lemma (see Claim \ref{claim:reduction to core} for the short proof of this fact).
\end{proof}


\section{Proofs of Lemmas \ref{lemma1} and \ref{lemma2}}\label{sec:proofs}

We start by introducing some recurring notions.
Recall that the {\em $b$-blowup} of a $k$-graph $H=(V,E)$ is the $k$-graph obtained by replacing
every vertex $v\in V$ with a $b$-tuple of vertices $S_v$, and then replacing every edge $e=\{v_1,\ldots,v_k\} \in E$ with
all possible $b^k$ edges $S_{v_1} \times S_{v_2} \times \cdots \times S_{v_k}$. Note that if $H'$ is the $b$-blowup of $H$,
then the map sending $S_v$ to $v$ is a homomorphism from $H'$ to $H$. We will frequently refer to this as the {\em natural}
homomorphism from $H'$ to $H$. We say that a $k$-graph $H$ is {\em homomorphic} to a $k$-graph $F$ if there is a homomorphism from $H$ to $F$. 
We first prove the following assertion, which was used in the proof of Theorem \ref{theomain}.

\begin{claim}\label{claim:reduction to core}
Let $F$ be a $k$-graph and let $C$ be a subgraph of $F$ so that $F$ is homomorphic to $C$. Then, if the $C$-removal lemma is not polynomial, then neither is the $F$-removal lemma.
\end{claim}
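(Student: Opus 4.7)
The plan is to convert a bad instance for $C$ into a bad instance for $F$ by a uniform blowup. Assume the $C$-removal lemma is not polynomial, so that for every constant $a$ there are arbitrarily small $\varepsilon > 0$ and $n$-vertex $k$-graphs $G$ which are $\varepsilon$-far from $C$-free yet contain at most $\varepsilon^a n^{v(C)}$ copies of $C$. I would set $b := v(F)$ and take $G'$ to be the $b$-blowup of $G$, so that $G'$ has $N = bn$ vertices; the goal is to show that $G'$ witnesses that the $F$-removal lemma is not polynomial.

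For the upper bound on copies of $F$ in $G'$, I would chain three inequalities. First, via the natural homomorphism $G' \to G$, the number of homomorphisms $F \to G'$ is exactly $b^{v(F)}$ times the number of homomorphisms $F \to G$. Second, restricting each homomorphism $F \to G$ to the subgraph $C \subseteq F$, and noting that each such restriction has at most $n^{v(F) - v(C)}$ extensions, yields $\mathrm{hom}(F, G) \leq \mathrm{hom}(C, G) \cdot n^{v(F) - v(C)}$. Third, $\mathrm{hom}(C, G)$ differs from $v(C)!$ times the number of copies of $C$ in $G$ by only a lower-order $O(n^{v(C)-1})$ term coming from non-injective maps. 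Assembling these bounds gives at most $O(\varepsilon^a N^{v(F)})$ copies of $F$ in $G'$.

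For the lower bound on edge-disjoint copies of $F$ in $G'$, I would exploit the given homomorphism $\psi : F \to C$. Because $\psi$ maps each edge of $F$ to an edge of $C$, which is a set of $k$ distinct vertices, $\psi$ is injective on every edge, so the assignment $u \mapsto (\psi(u), r(u))$, where $r(u)$ indexes $u$ among the vertices of $\psi^{-1}(\psi(u))$, embeds $F$ into the $b$-blowup $C^{(b)}$ (using $b = v(F) \geq \max_c |\psi^{-1}(c)|$). Composing with the $b$-blowup of any embedded copy of $C$ in $G$ yields a copy of $F$ inside $G'$, and under the natural projection $G' \to G$ each edge of this $F$-copy lands in an edge of the underlying $C$-copy. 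Hence edge-disjoint copies of $C$ in $G$ pull back to edge-disjoint copies of $F$ in $G'$, giving at least $\varepsilon n^k = (\varepsilon / b^k) N^k$ such copies.

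Combining the two bounds, $G'$ is $\varepsilon'$-far from $F$-free with $\varepsilon' = \Theta(\varepsilon)$ but has only $O(\varepsilon^a) N^{v(F)}$ copies of $F$; since $a$ may be taken arbitrarily large, the $F$-removal lemma cannot be polynomial. The only delicate bookkeeping is in the hom-versus-copies conversion, but the non-injective homomorphism count and the automorphism factor contribute only a constant and a lower-order term that are absorbed once $n$ is taken large enough in terms of $1/\varepsilon$, so this is not a real obstacle.
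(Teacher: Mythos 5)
Your proposal is correct and follows essentially the same route as the paper: blow up the $C$-counterexample by a factor of $v(F)$, obtain edge-disjoint copies of $F$ from the embedding of $F$ into the $v(F)$-blowup of each edge-disjoint $C$-copy, and bound copies of $F$ from above by projecting to the base graph and reducing to the count of $C$-copies, absorbing the non-injective/degenerate contributions as lower-order terms for $n$ large. The only cosmetic difference is that you organize the upper bound via homomorphism counts ($\mathrm{hom}(F,G')=b^{v(F)}\mathrm{hom}(F,G)\leq b^{v(F)}n^{v(F)-v(C)}\mathrm{hom}(C,G)$) where the paper counts copies of $C$ in the blowup directly, which is the same decomposition in different bookkeeping.
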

\begin{proof}
	Since the $C$-removal lemma is not polynomial, there is a function $\delta : (0,1) \rightarrow (0,1)$ such that $1/\delta(\varepsilon)$ grows faster than any polynomial in $1/\varepsilon$, and such that for every $\varepsilon > 0$ and large enough $n$ there is an $n$-vertex $k$-graph $H_1$ which contains a collection $\mathcal{C}$ of $\varepsilon n^k$ edge-disjoint copies of $C$ but only $\delta n^{v(C)}$ copies of $C$ altogether. Let $H$ be the $v(F)$-blowup of $H_1$. Note that the $v(F)$-blowup of $C$ contains a copy of $F$. Also, copies of $F$ corresponding to different copies of $C$ from $\mathcal{C}$ are edge-disjoint. Hence, $H$ has a collection of $\varepsilon n^k = \varepsilon (v(H)/v(F))^k = \Omega(\varepsilon \cdot v(H)^k) = \varepsilon' v(H)^k$ edge-disjoint copies of $F$, for a suitable $\varepsilon' = \Omega(\varepsilon)$. Let us bound the total number of copies of $F$ in $H$. Since $C$ is a subgraph of $F$, each copy of $F$ must contain a copy of $C$.
Let $\varphi : V(H) \rightarrow V(H_1)$ be the natural homomorphism from $H$ to $H_1$ (as defined above).
	For each copy $C'$ of $C$ in $H$, consider the subgraph $\varphi(C')$ of $H_1$. The number of copies $C'$ of $C$ with $v(\varphi(C')) < v(C)$ is at most $v(F)^{v(C)} \cdot O(n^{v(C) - 1}) \leq \delta n^{v(C)}$, provided that $n$ is large enough. The number of copies $C'$ of $C$ with $\varphi(C') \cong C$ is at most $v(F)^{v(C)} \cdot \delta n^{v(C)} = O(\delta n^{v(C)})$, because $H_1$ contains at most $\delta n^{v(C)}$ copies of $C$. So in total, $H$ contains at most $O(\delta n^{v(C)})$ copies of $C$. This means that $H$ contains at most $O(\delta n^{v(C)}) \cdot v(H)^{v(F) - v(C)} =
	O(\delta \cdot v(H)^{v(F)}) = \delta' v(H)^{v(F)}$ copies of $F$, for a suitable $\delta' = O(\delta)$. Note that $1/\delta'$ is super-polynomial in $1/\varepsilon'$. This shows that the $F$-removal lemma is not polynomial.
\end{proof}

Since the core of $F$ satisfies the properties of $C$ in the above claim, it indeed establishes the assertion which we used when proving
Theorem \ref{theomain}, namely that it suffices to prove the theorem when $F$ is a core.

It thus remains to prove Lemmas \ref{lemma1} and \ref{lemma2}. We begin preparing these proofs with some auxiliary lemmas.
The following is a key property of cores that we will use in this section.

\begin{claim}\label{claim:core}
Let $F$ be a core $k$-graph, let $H$ be a $k$-graph, and let $\varphi : H \rightarrow F$ be a homomorphism. Then for every copy $F'$ of $F$ in $H$, the map $\varphi_{|{V(F')}}$ is an isomorphism.
\end{claim}

\begin{proof}
We first observe that every homomorphism from a core $F$ to itself is an isomorphism.
Indeed, by definition, $F$ is the core of itself, meaning that there is no homomorphism from $F$ to a subgraph $F_0$ of $F$ with $V(F_0) \subsetneq V(F)$. Hence, every homomorphism from $F$ to itself is a bijection, and hence an isomorphism. 
The assertion of the claim now follows from the fact that $\varphi_{|{V(F')}}$ is a homomorphism from $F'$ (which is a copy of $F$) to $F$.
\end{proof}

The following definition will play an important role in our proofs. 
Let $F$ be a $k$-graph on vertex-set $[f]$ and let $G$ be an $f$-partite $k$-graph with sides $V_1,\dots,V_f$. A {\em canonical copy} of $F$ in $G$ is a copy consisting of vertices $v_1 \in V_1,\dots,v_f \in V_f$ in which $v_i$ plays the role of $i \in V(F)$ for each $i = 1,\dots,f$. Note that if $G$ is homomorphic to $F$ via the homomorphism mapping $V_i$ to $i$ (for each $i=1,\dots,f$), then $G$ every copy of $F$ in $G$ is canonical; this follows from Claim \ref{claim:core}.

We now describe our approach for proving Lemma \ref{lemma1} (the approach for Lemma \ref{lemma2} is similar).
Let $I \subseteq V(F)$ be a set of vertices so that the $2$-shadow of $F$ induced on $I$ is a cycle $C_t$, $t \geq 4$. Then $|I \cap e| \leq 2$ for every $e \in E(F)$. 
We first use a construction from \cite{Alon}, giving a $t$-partite graph which consists of many edge-disjoint canonical copies of $C_t$, yet contains only few canonical copies of $C_t$ altogether.
The second step is then to extend the graph thus constructed into a $k$-graph containing many edge-disjoint copies of $F$
yet few copies of $F$. The following lemma will help us in performing this extension.
For $\ell \geq 1$, two sets are called {\em $\ell$-disjoint} if their intersection has size at most $\ell-1$. Two subgraphs of a hypergraph are called $\ell$-disjoint if their vertex-sets are $\ell$-disjoint.
In what follows, when considering an $s$-partite hypergraph with parts $V_1,\dots,V_s$, we will refer to the edges as sets or $s$-tuples, interchangeably. Moreover, we will use both set notation and $s$-tuple notation. For example, for $F \in V_1 \times \dots \times V_s$, we write $F(i)$ for the $i$'th coordinate of $F$; and for $F_1,F_2 \in V_1 \times \dots \times V_s$, we write $F_1 \cap F_2$ for the intersection of $F_1,F_2$ as sets. 
	
	\begin{lemma}\label{lem:extension}
		Let $r,s,k,\ell \geq 0$ satisfy $k \geq \ell$ and $r \geq k-\ell$. Let
		$V_1,\dots,V_s,V_{s+1},\dots,V_{s+r}$ be pairwise-disjoint sets of size $n$ each. Let $\mathcal{S} \subseteq V_1 \times \dots \times V_s$ be a family of $\ell$-disjoint sets. Then there is a family $\mathcal{F} \subseteq V_1 \times \dots \times V_{s+r}$ with the following properties:
		\begin{enumerate}
			\item For every $F \in \mathcal{F}$ it holds that $F|_{V_1 \times \dots \times V_s} \in \mathcal{S}$.
			\item $|\mathcal{F}| = \Omega_{r,s,k}(|\mathcal{S}| n^{k-\ell})$.
			\item For every pair of distinct $F_1,F_2 \in \mathcal{F}$, if $|F_1 \cap F_2| \geq k$ then $$\#\{s+1 \leq i \leq s+r : F_1(i) = \nolinebreak F_2(i)\} \leq k-\ell-1$$
		\end{enumerate}
	\end{lemma}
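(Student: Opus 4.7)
My plan is to prove the lemma by a sparsification argument: I start from the complete candidate family
\[
\mathcal{F}_0 = \mathcal{S} \times V_{s+1} \times \cdots \times V_{s+r},
\]
of size $|\mathcal{S}|n^r$, define a ``conflict graph'' $H$ on vertex set $\mathcal{F}_0$ in which $F_1 \sim F_2$ iff they form a bad pair (distinct, $|F_1 \cap F_2| \geq k$, and agreeing on at least $k-\ell$ of the positions $s+1,\ldots,s+r$), and extract a large independent set $\mathcal{F} \subseteq \mathcal{F}_0$. Any independent set satisfies properties (1) and (3) automatically, so it suffices to show $H$ has maximum degree $O_{r,s,k}(n^{r-k+\ell})$; a standard greedy argument then produces $\mathcal{F}$ of size $|\mathcal{F}_0|/(\Delta(H)+1) = \Omega(|\mathcal{S}|n^{k-\ell})$, giving (2).

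The key input is the following counting estimate that I would extract from the $\ell$-disjointness of $\mathcal{S}$: for every partial transversal $A \subseteq V_1 \cup \cdots \cup V_s$ of size $a \leq \ell$, the number of $S \in \mathcal{S}$ containing $A$ is at most $n^{\ell-a}$. To prove it, I would restrict such sets to the $s-a$ coordinates outside $A$, obtaining an $(\ell-a)$-disjoint family in an $(s-a)$-coordinate product, and then double-count $(\ell-a)$-subsets of members (each such subset lies in at most one member by $(\ell-a)$-disjointness, and there are at most $\binom{s-a}{\ell-a}n^{\ell-a}$ candidates in total). The case $a=0$ recovers the bound $|\mathcal{S}|\leq n^\ell$.

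To bound the max degree of $H$, I fix $F = (S,T) \in \mathcal{F}_0$ and count bad neighbours $F' = (S',T')$ split by $a = |S \cap S'|$. The diagonal case $S' = S$ contributes at most $O(n^{r-(k-\ell)}) = O(n^{r-k+\ell})$ choices of $T'$ (pick $k-\ell$ positions on which $T'$ agrees with $T$, and arbitrary values elsewhere). For $S' \neq S$ we have $a \leq \ell-1$, and a bad pair requires the number of agreements $b$ in the last $r$ coordinates to satisfy $b \geq k-a$. By the counting estimate summed over the $\binom{s}{a}$ choices of $a$-subset of $S$, the number of eligible $S'$ is $O(n^{\ell-a})$, while the number of eligible $T'$ is $O(n^{r-(k-a)})$; their product is $O(n^{r-k+\ell})$, which is independent of $a$, and summing over $a \in \{0,\ldots,\ell-1\}$ still gives $O(n^{r-k+\ell})$.

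The main obstacle I anticipate is precisely the counting estimate: the degree bound needs the sharp value $n^{\ell-a}$ and not merely the trivial $|\mathcal{S}|\leq n^\ell$, because it is the $a$-dependence that causes the two factors bounding $\#S'$ and $\#T'$ to cancel to a quantity independent of $a$. Once this estimate is in place, the rest of the proof is routine.
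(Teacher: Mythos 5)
Your argument is correct and proves the lemma in full. It differs from the paper's proof in mechanism, though not in substance: the paper keeps each candidate $S\cup A$, $S\in\mathcal{S}$, $A\in V_{s+1}\times\cdots\times V_{s+r}$, independently at random with probability $1/(Cn^{r-k+\ell})$, bounds the expected number of violating pairs using the aggregate count that at most $|\mathcal{S}|\binom{s}{t}n^{\ell-t}$ pairs of members of $\mathcal{S}$ intersect in at least $t$ elements, and then deletes one set from each violation (an alteration argument), whereas you bound the maximum degree of the conflict graph on the full candidate family $\mathcal{S}\times V_{s+1}\times\cdots\times V_{s+r}$ by $O_{r,s,k}(n^{r-k+\ell})$ and extract a greedy independent set. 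Your pointwise estimate --- at most $n^{\ell-a}$ members of $\mathcal{S}$ contain a fixed partial transversal of size $a\le\ell$ --- is exactly the local form of the paper's pair count, and both proofs turn on the same cancellation $n^{\ell-a}\cdot n^{r-(k-a)}=n^{r-k+\ell}$, uniformly in $a$; your case analysis (the diagonal case $S'=S$ versus $a=|S\cap S'|\le\ell-1$, where $|F\cap F'|\ge k$ forces at least $k-a$ agreements among the last $r$ coordinates) is sound, including the degenerate situations $k=\ell$ and $k-a>r$, where the relevant counts are trivially within the claimed bounds. What your route buys is a fully deterministic construction and a per-vertex degree bound, which is slightly more informative than an expected total number of violations; the paper's probabilistic version is marginally shorter to write down. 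One small remark: your double-counting derivation of the key estimate tacitly assumes $\ell\le s$ (otherwise $\binom{s-a}{\ell-a}=0$); in that degenerate range the bound holds trivially, since at most $n^{s-a}\le n^{\ell-a}$ members contain a given $a$-set, and in the paper's applications one always has $\ell\le s$ anyway.
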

	\begin{proof}
	We construct the family $\mathcal{F}$ as follows. For each $S \in \mathcal{S}$ and each $r$-tuple $A \in V_{s+1} \times \dots \times V_{s+r}$, add $S \cup A$ to $\mathcal{F}$ with probability $1/(Cn^{r-k+\ell})$ and independently, where $C$ is a large constant to be chosen later. Item 1 is satisfied by definition. Let us estimate the number of pairs $F_1,F_2 \in \mathcal{F}$ violating Item 3; denote this number by $B$. We claim that 
	\begin{equation}\label{eq:k-disjoint violations}
		\mathbb{E}[B] = O_{s,r,k}\left(\frac{1}{C^2} \right) \cdot |\mathcal{S}| \cdot n^{k-\ell}.
	\end{equation}
	To this end, suppose that $F_1,F_2 \in \mathcal{F}$ violate Item 3, and write
	$F_1 = S_1 \cup A_1$ and $F_2 = S_2 \cup A_2$, where $S_1,S_2 \in \mathcal{F}$ and $A_1,A_2 \in V_{s+1} \times \dots \times V_{s+r}$.
	Suppose first that $S_1 = S_2$. Then there are $|\mathcal{S}|$ choices for $S_1,S_2$. Also, to violate Item 3, it must hold that $|A_1 \cap A_2| \geq k-\ell$. The number of choices of $A_1,A_2 \in V_{s+1} \times \dots \times V_{s+r}$ with $|A_1 \cap A_2| \geq k-\ell$ is at most $n^{r} \cdot \binom{r}{k-\ell} \cdot n^{r-k+\ell}$. Finally, the probability that $F_1,F_2 \in \mathcal{F}$ is $1/(Cn^{r-k+\ell})^2$. Hence, the expected number of violations of this type (i.e., with $S_1 = S_2$) is at most $|\mathcal{S}| \cdot n^{r} \cdot \binom{r}{k-\ell} \cdot n^{r-k+\ell} \cdot 1/(Cn^{r-k+\ell})^2 = O_{s,r,k}\left(\frac{1}{C^2} \right) \cdot |\mathcal{S}| \cdot n^{k-\ell}$.
	
	Now consider the case that $S_1 \neq S_2$, and put $t := |S_1 \cap S_2|$. 
	As the sets in $\mathcal{S}$ are pairwise $\ell$-disjoint, we have $t \leq \ell-1$. 
	Also, the number of choices for $S_1,S_2 \in \mathcal{S}$ with $|S_1 \cap S_2| = t$ is at most $|\mathcal{S}| \cdot \binom{s}{t} \cdot n^{\ell-t}$, again using that the sets in $\mathcal{S}$ are pairwise $\ell$-disjoint.
	In order for $F_1,F_2$ to violate Item 3, we must have $|A_1 \cap A_2| \geq k-t$. The number of choices for $A_1,A_2 \in V_{s+1} \times \dots \times V_{s+r}$ with $|A_1 \cap A_2| \geq k-t$ is at most $n^{r} \cdot \binom{r}{k-t} \cdot n^{r-k+t}$. Finally, as before, the probability that $F_1,F_2 \in \mathcal{F}$ is $1/(Cn^{r-k+\ell})^2$. Hence, the expected number of violations of this type (i.e., with $S_1 \neq S_2$) is at most
	$$
	\sum_{t=0}^{\ell-1} \left[ |\mathcal{S}| \cdot \binom{s}{t} \cdot n^{\ell-t} \cdot n^{r} \cdot \binom{r}{k-t} \cdot n^{r-k+t} \cdot \left(\frac{1}{Cn^{r-k+\ell}}\right)^2 \right] = 
	O_{s,r,k}\left(\frac{1}{C^2} \right) \cdot |\mathcal{S}| \cdot n^{k-\ell}.
	$$
	This proves \eqref{eq:k-disjoint violations}.
	Now note that the expected size of $\mathcal{F}$ is $|\mathcal{S}| \cdot n^r \cdot \frac{1}{Cn^{r-k+\ell}} = \frac{1}{C} \cdot |\mathcal{S}| \cdot n^{k-\ell}$. So by choosing $C$ to be large enough (as a function of $s,r,k$), we can guarantee that $\mathbb{E}[|\mathcal{F}| - B] \geq \frac{1}{2C} \cdot |\mathcal{S}| \cdot n^{k-\ell}$. By fixing such a choice of $\mathcal{F}$ and deleting one set $F \in \mathcal{F}$ from each violation, we get the required \nolinebreak conclusion.
\end{proof}
	\noindent
	The following well-known fact is an easy corollary of Lemma \ref{lem:extension}.
	\begin{lemma}\label{lem:design}
		Let $1 \leq k \leq r$, and let $V_1,\dots,V_r$ be pairwise-disjoint sets of size $n$ each. Then there is $\mathcal{F} \subseteq V_1 \times \dots \times V_r$, $|\mathcal{F}| \geq \Omega(n^{k})$, such that the $r$-sets in $\mathcal{F}$ are $k$-disjoint.
	\end{lemma}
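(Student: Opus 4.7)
The plan is to derive Lemma \ref{lem:design} as a direct specialization of Lemma \ref{lem:extension} in the degenerate case $s=\ell=0$.

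Specifically, I would apply Lemma \ref{lem:extension} with $s=0$, $\ell=0$, and the given values of $r$ and $k$; the hypotheses $k \geq \ell$ and $r \geq k-\ell$ reduce to $k \geq 0$ and $r \geq k$, both of which hold. Take $\mathcal{S}$ to be the singleton family whose unique element is the empty tuple in the empty product $V_1 \times \cdots \times V_0$. Since $\mathcal{S}$ has only one element, the pairwise $\ell$-disjointness requirement on $\mathcal{S}$ is vacuous. Property 1 of Lemma \ref{lem:extension} then imposes no constraint, property 2 yields a family $\mathcal{F} \subseteq V_1 \times \cdots \times V_r$ with $|\mathcal{F}| \geq \Omega(|\mathcal{S}|\, n^{k-\ell}) = \Omega(n^k)$, and property 3 says that for any two distinct $F_1, F_2 \in \mathcal{F}$, if $|F_1 \cap F_2| \geq k$ then the number of coordinates $i \in \{1,\dots,r\}$ on which $F_1$ and $F_2$ agree is at most $k-\ell-1 = k-1$.

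To finish the argument, I would observe that since $V_1, \dots, V_r$ are pairwise disjoint, two $r$-tuples $F_1, F_2 \in V_1 \times \cdots \times V_r$, viewed as sets, intersect exactly in the coordinates where they agree, i.e.\ $|F_1 \cap F_2| = \#\{1 \leq i \leq r : F_1(i) = F_2(i)\}$. Property 3 therefore says that $|F_1 \cap F_2| \geq k$ would imply $|F_1 \cap F_2| \leq k-1$, a contradiction; so in fact every pair of distinct $F_1, F_2 \in \mathcal{F}$ satisfies $|F_1 \cap F_2| \leq k-1$, meaning $\mathcal{F}$ is $k$-disjoint, which is the desired conclusion.

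There is essentially no obstacle here: the statement is a bookkeeping exercise once one sees that setting $s=\ell=0$ in Lemma \ref{lem:extension} turns its conclusion into exactly the $k$-disjoint design statement. The only sanity check is that the constant hidden by $\Omega$ in property 2 of Lemma \ref{lem:extension} depends only on $r,s,k$, so fixing $s=0$ leaves a constant depending only on $r$ and $k$, matching what Lemma \ref{lem:design} claims.
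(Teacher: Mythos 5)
Your proposal is correct and is exactly the paper's proof: the paper derives Lemma \ref{lem:design} by applying Lemma \ref{lem:extension} with $s=\ell=0$ and $\mathcal{S}=\{\emptyset\}$, and you have simply spelled out the (correct) bookkeeping that Item 3 with $k-\ell-1=k-1$ forces $k$-disjointness. No issues.
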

	\begin{proof}
		Apply Lemma \ref{lem:extension} with $s = \ell = 0$ and $\mathcal{S} = \{\emptyset\}$.
	\end{proof}
\noindent

The next lemma shows why constructing a $k$-graph with a sublinear number of edge-disjoint copies of $F$ can be boosted
to prove Lemmas \ref{lemma1} and \ref{lemma2}. The lemma makes crucial use of the fact that $F$ is a core.

\begin{lemma}\label{lem:blowup}
Let $F$ be a core $k$-graph, and suppose that for every $\delta > 0$ and large enough $n$, there is an $n$-vertex $k$-graph $H$ which is homomorphic to $F$, has a collection of at least $n^{k-\delta}$ edge-disjoint copies of $F$, but has at most $n^{v(F) - 1}$ copies of $F$ altogether. Then the $F$-removal lemma is not \nolinebreak polynomial.
	\end{lemma}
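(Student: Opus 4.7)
My plan is a direct asymptotic comparison: I will argue that the pair of parameters supplied by the hypothesis (many edge-disjoint copies, few total copies) already contradicts every polynomial lower bound on the $F$-removal function, with no further construction or blowup actually needed.

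Let $H_n$ denote the $n$-vertex $k$-graph granted by the hypothesis. First, since a single edge can belong to at most one member of an edge-disjoint family of copies of $F$, destroying all copies of $F$ in $H_n$ requires deleting at least $n^k/e^{C\sqrt{\log n}}$ edges. Hence $H_n$ is $\varepsilon_n$-far from being $F$-free with $\varepsilon_n := e^{-C\sqrt{\log n}}$. Second, the hypothesis that $H_n$ contains at most $n^{v(F)-1}$ copies of $F$ translates into an $F$-copy density of at most $n^{-1}$. Now suppose toward contradiction that the $F$-removal lemma is polynomial, i.e.\ there exist constants $c, A > 0$ such that every sufficiently large $\varepsilon$-far $N$-vertex $k$-graph contains at least $c\varepsilon^A N^{v(F)}$ copies of $F$. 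Applying this to $H_n$ yields
\[
c\,\varepsilon_n^{A}\cdot n^{v(F)} \;\leq\; n^{v(F)-1},
\]
equivalently $cn \leq \varepsilon_n^{-A} = e^{CA\sqrt{\log n}}$. Taking logarithms gives $\log n \leq CA\sqrt{\log n} - \log c$, which fails for every sufficiently large $n$ because $\log n$ eventually dominates $\sqrt{\log n}$. This is the desired contradiction, so the $F$-removal lemma cannot be polynomial.

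There is essentially no obstacle in this particular lemma: its proof boils down to the standard observation that a collection of edge-disjoint copies of $F$ is a lower-bound certificate for $\varepsilon$-farness, combined with the quasi-polynomial separation $\log(1/\delta_n) = \log n$ versus $\log(1/\varepsilon_n) = C\sqrt{\log n}$, which is already super-polynomial. The ``core'' and ``$H$ is homomorphic to $F$'' hypotheses do not enter the argument above; they are used only in the proofs of Lemmas \ref{lemma1} and \ref{lemma2} (via Claim \ref{claim:core}) to actually construct hypergraphs $H_n$ meeting the stringent copy-count bound $n^{v(F)-1}$. That construction --- not the present comparison --- is where the real work of the paper lies.
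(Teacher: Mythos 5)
There is a genuine gap, and it sits exactly where you claim there is ``no obstacle.'' The removal lemma is an asymptotic statement: $\varepsilon$ is fixed first, and the guarantee ``$\geq \delta(\varepsilon) n^{v(F)}$ copies'' only has to hold for $n$-vertex graphs with $n$ sufficiently large as a function of $\varepsilon$. This is also how the paper itself operates with non-polynomiality (see the proof of Claim \ref{claim:reduction to core}, where the witness is ``for every $\varepsilon>0$ \emph{and large enough $n$} there is an $n$-vertex $k$-graph \dots''). Your counterexamples couple $\varepsilon$ to $n$: for each $n$ you invoke the single value $\varepsilon_n=e^{-C\sqrt{\log n}}$, so for a fixed $\varepsilon$ you produce an example at essentially one scale, $n\approx \exp\bigl((\log(1/\varepsilon)/C)^2\bigr)$, and nothing beyond it. A hypothetical polynomial bound $\delta(\varepsilon)=c\varepsilon^A$ that is only asserted for $n\geq n_0(\varepsilon)$, with $n_0$ growing arbitrarily fast (say, Ackermann-type) in $1/\varepsilon$, is not contradicted by your computation, since your graph $H_n$ then lies in the excluded range $n<n_0(\varepsilon_n)$. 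What you rule out is only a bound uniform over all pairs $(\varepsilon,n)$ simultaneously, which is strictly weaker than the lemma and insufficient for Theorem \ref{theomain}.

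Closing this quantifier gap is precisely the content of the paper's proof, and it is where the hypotheses you declare unnecessary do real work. For a fixed $\varepsilon$ and \emph{every} large $n$, the paper chooses $m=m(\varepsilon)$ with $e^{C\sqrt{\log m}}\leq 1/\varepsilon$, takes the hypothesis graph $H$ on $m$ vertices, and passes to its $(n/m)$-blowup $G$, which has $\Omega(\varepsilon n^k)$ edge-disjoint copies of $F$ by Lemma \ref{lem:design}. The delicate step is bounding the total number of copies of $F$ in $G$: a blowup can in general create copies of $F$ that do not project onto copies in $H$, and it is exactly because $H$ is homomorphic to $F$ and $F$ is a core that Claim \ref{claim:core} forces every copy of $F$ in $G$ to map onto a copy of $F$ in $H$, giving at most $m^{v(F)-1}(n/m)^{v(F)}\leq \varepsilon^{\Omega(\log(1/\varepsilon))}n^{v(F)}$ copies. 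So your statement that the ``core'' and ``homomorphic to $F$'' assumptions do not enter this lemma is incorrect --- without them the decoupling of $\varepsilon$ from $n$ fails and the lemma as stated does not follow. Your farness observation and the quasi-polynomial comparison are correct as far as they go, but they prove a weaker assertion than Lemma \ref{lem:blowup}.
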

	\begin{proof}
		Let $\varepsilon > 0$ and let $n$ be large enough.
		Let $m$ be the largest integer satisfying $m^{\delta} \leq 1/\varepsilon$, so that $m \geq (1/\varepsilon)^{1/(2\delta)}$, say. Let $H$ be the $k$-graph guaranteed to exist by the assumption of the lemma, but with $m$ in place of $n$. So $H$ has $m$ vertices, is homomorphic to $F$, contains a collection $\mathcal{F}$ of $m^{k-\delta} \geq \varepsilon m^k$ edge-disjoint copies of $F$, but has at most $m^{v(F) - 1}$ copies of $F$ altogether.

Let $G$ be the $\frac{n}{m}$-blowup of $H$. Each $F' \in \mathcal{F}$ gives rise to $\Omega((\frac{n}{m})^k)$ $k$-disjoint (and hence also edge-disjoint) copies of $F$ in $G$, by Lemma \ref{lem:design} applied with $r = v(F)$ and with $\frac{n}{m}$ in place of $n$. Copies arising from different $F'_1,F'_2 \in \mathcal{F}$ are edge-disjoint, because the copies in $\mathcal{F}$ are edge-disjoint.
Altogether, this gives a collection of $\varepsilon m^k \cdot \Omega((\frac{n}{m})^k) = \Omega(\varepsilon n^k)$ edge-disjoint copies of $F$ in $G$.

Let us upper-bound the total number of copies of $F$ in $G$.
By assumption, there is a homomorphism $\varphi$ from $H$ to $F$.
Let $\psi$ be the ``natural'' homomorphism from $G$ to $H$ (as described in the beginning of this section).
Then $\varphi \circ \psi$ is a homomorphism from $G$ to $F$. By Claim \ref{claim:core}, for every copy $F'$ of $F$ in
$G$ the map $(\varphi \circ \psi)|_{{V(F')}}$ is an isomorphism from $F'$ to $F$.
We claim that this means that $\psi$ maps every copy $F'$ of $F$ in $G$ onto a copy of $F$ in $H$.
Indeed, $\psi|_{{V(F')}}$ must be injective (otherwise $(\varphi \circ \psi)|_{{V(F')}}$ would not be an isomorphism),
and since $\psi|_{{V(F')}}$ must map edges to edges (on account of being a homomorphism) its image must contain a copy of $F$.
We thus see that every copy of $F$ in $G$ must come from the blown-up copies of $F$ in $H$.
But each copy of $F$ in $H$ gives rise to $(\frac{n}{m})^{v(F)}$ copies of $F$ in $G$. Hence, the total number of copies of $F$ in $G$ is at most $$
m^{v(F) - 1} \cdot (n/m)^{v(F)} = n^{v(F)}/m \leq \varepsilon^{1/(2\delta)} \cdot n^{v(F)}\;.
$$
Since $\delta > 0$ is arbitrary, this shows that the $F$-removal lemma is not polynomial. 	
\end{proof}	

	\noindent
	The following result is implicit in \cite{Alon}. For the sake of completeness, we include a proof.
	\begin{lemma}
		\label{lem:graph_RS}
		Let $t \geq 3$. Then for every large enough $n$, there is a $t$-partite graph $G$ with sides $V_1,\dots,V_{t}$, each of size $n$, such that $G$ has a collection of $n^2/e^{O(\sqrt{\log n})} = n^{2-o(1)}$ $2$-disjoint canonical copies of $C_t$, but at most $n^{t-1}$ canonical copies of $C_t$ altogether.
	\end{lemma}
	\begin{proof}
Suppose that the vertices of $C_{t}$ are $1,2,\dots,t$ (appearing in this order along the cycle). 
Take a set $B \subseteq [n/t]$, $|B| \geq n/e^{O\sqrt{\log n}}$, with no non-trivial solution to the linear equation $y_1 + \dots + y_{t-1} = (t-1)y_{t}$ with $y_1,\dots,y_{t} \in B$ (where a solution is trivial if $y_1=y_2=\ldots=y_{t}$). The existence of such a set $B$ is by a simple generalization of Behrend's construction \cite{Behrend} of sets avoiding 3-term arithmetic progressions, see \cite[Lemma 3.1]{Alon}. Take pairwise-disjoint sets $V_1,\dots,V_{t}$ of size $n$ each, and identify each $V_i$ with $[n]$. For each $x \in [n/t]$ and $y \in B$, add to $G$ a canonical copy $S_{x,y}$ of $C_{t}$ on the vertices $v_i = x + (i - 1)y \in V_i$, $i = 1,\dots,t$. Note that $x + (i - 1)y \leq x + (t-1)y \leq n$, so $v_i$ indeed ``fits'' into $V_i = [n]$.
		The copies $S_{x,y}$ (where $x \in [n/t], y \in B$) are 2-disjoint. Indeed, if $S_{x_1,y_1},S_{x_2,y_2}$ intersect in $V_i$ and in $V_j$, then $x_1 + (i - 1)y_1 = x_2 + (i - 1)y_2$ and
		$x_1 + (j - 1)y_1 = x_2 + (j - 1)y_2$, and solving this system of equations gives $x_1 = x_2, y_1 = y_2$. The number of copies $S_{x,y}$ is $\frac{n}{t} \cdot |B| \geq n^2/e^{O\sqrt{\log n}}$.
		
		Let us bound the total number of canonical copies of $C_{t}$ in $G$. Fix a canonical copy with vertices $v_1,\dots,v_{t}$, $v_i \in V_i$. 
		For $1 \leq j \leq t-1$, let $x_j \in [n/t], y_j \in B$ be such that $v_{j},v_{j+1} \in S_{x_j,y_j}$. Similarly, let $x_{t} \in [n/t], y_{t} \in B$ such that $v_1,v_{t} \in S_{x_{t},y_{t}}$. Then we have
		$v_{j+1} - v_{j} = y_j$ for every $1 \leq j \leq t-1$, and
		$v_{t} - v_{1} = (t-1)y_{t}$. So $y_1 + \dots + y_{t-1} = (t-1)y_{t}$. By our choice of $B$, we have $y_1 = \dots = y_{t} =: y$. Now, for each $1 \leq j \leq t-1$ we have $x_j = v_{j+1} - j \cdot y = x_{j+1}$, so $x_1 = \dots = x_{t} =: x$. So we see that the only canonical copies of $C_{t}$ in $G$ are the copies $S_{x,y}$. Their number is at most $n^2 \leq n^{t-1}$, \nolinebreak as \nolinebreak required. 
	\end{proof}

%
	\noindent
	Recall that $K_{s}^{(s-1)}$ is the $(s-1)$-graph with vertices $1,\dots,s$ and all $s$ possible edges. The following construction appears implicitly in \cite{KNR} (see also \cite{AS}). Again, for completeness, we include a proof.
	
	\begin{lemma}
		\label{lem:k-simplex_RS}
		Let $s \geq 3$. For every large enough $n$, there is an $s$-partite $(s-1)$-graph $G$ with sides $V_1,\dots,V_s$, each of size $n$, such that $G$ has a collection of $n^{s-1}/e^{O(\sqrt{\log n})} = n^{s-1-o(1)}$ $(s-1)$-disjoint canonical copies of $K_{s}^{(s-1)}$, but at most $n^{s-1}$ copies of $K_{s}^{(s-1)}$ altogether.
	\end{lemma}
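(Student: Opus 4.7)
The plan is to mimic Lemma~\ref{lem:graph_RS} but with arithmetic progressions replaced by values of degree-$(s-2)$ polynomials, pushing the Behrend-type condition into the leading coefficient. Identify each $V_i$ with $[n]$ and fix $M = \lfloor n/(2s^{s-1})\rfloor$. Let $B\subseteq[M]$ be a set with no non-trivial 3-term arithmetic progression and $|B|\ge M/e^{O(\sqrt{\log n})}$, as provided by Behrend's construction. For each tuple $(x_1,\ldots,x_{s-2},y)\in [M]^{s-2}\times B$, define the polynomial $P(t):= x_1+x_2t+x_3t^2+\cdots+x_{s-2}t^{s-3}+y\,t^{s-2}$ and add to $G$ the canonical copy $S_P$ of $K_s^{(s-1)}$ on the vertices $v_i := P(i)$, $i=1,\ldots,s$. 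The choice of $M$ ensures $1\le v_i\le n$, and the number of base copies is $M^{s-2}|B|\ge \Omega(n^{s-1}/e^{O(\sqrt{\log n})})$.

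For the $(s-1)$-disjointness of the base copies: if two base copies with polynomials $P, P'$ share $\ge s-1$ vertices, then the difference $P-P'$ has degree $\le s-2$ and vanishes on at least $s-1$ points of $\{1,\ldots,s\}$, so $P\equiv P'$ and the base copies coincide.

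The main step is to bound the total number of canonical copies of $K_s^{(s-1)}$ in $G$. Given any such copy $(w_1,\ldots,w_s)$, the $i$-th edge $\{w_j:j\ne i\}$ must be an $(s-1)$-subset of some base copy, forcing $w_j=P_i(j)$ for every $j\ne i$, where $P_i$ has degree $\le s-2$ and leading coefficient $y^{(i)}\in B$. Let $Q$ be the unique polynomial of degree $\le s-1$ with $Q(j)=w_j$ for $j=1,\ldots,s$, and let $c, d$ denote its coefficients of $t^{s-1}$ and $t^{s-2}$. Since $P_i$ agrees with $Q$ at the $s-1$ inputs $j\ne i$ and $\deg(P_i)\le s-2$, one has $P_i(t)=Q(t)-c\prod_{j\ne i}(t-j)$; reading off the $t^{s-2}$-coefficient on both sides gives $y^{(i)} = d + c\bigl(\binom{s+1}{2}-i\bigr)$. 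Hence $(y^{(1)},\ldots,y^{(s)})$ is an $s$-term arithmetic progression in $B$ with common difference $-c$, and since $s\ge 3$ and $B$ is 3-AP-free this forces $c=0$. Therefore $Q$ has degree $\le s-2$ with leading coefficient $d\in B$, so the canonical copy is determined by choosing $d\in B$ together with $s-2$ further values $Q(1),\ldots,Q(s-2)\in[n]$, yielding at most $|B|\cdot n^{s-2}\le n^{s-1}$ canonical copies.

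The main obstacle I expect is the short coefficient calculation producing the AP structure of $(y^{(i)})_i$; once that is in hand, the Behrend 3-AP-free condition eliminates $c$ cleanly, and what remains (verifying the scale of $M$ and that all $P(i)\in[n]$) is routine bookkeeping.
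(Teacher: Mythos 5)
Your proof is correct, and it reaches the same quantitative conclusion as the paper, but via a different concrete construction and rigidity argument. The paper (following Kohayakawa--Nagle--R\"odl) places the base copy for parameters $x_1,\dots,x_{s-2},y$ on the explicit linear forms $x_1,\dots,x_{s-2},\ y+\sum_i x_i,\ 2y+\sum_i x_i$, and then extracts the relation $y_1+y_2=2y_3$ by looking at just three of the $s$ link edges of a putative copy and solving the resulting linear equations directly. You instead parametrize base copies as value tuples $\bigl(P(1),\dots,P(s)\bigr)$ of degree-$(s-2)$ polynomials whose top coefficient lies in the Behrend set, and prove rigidity by interpolation: writing $P_i(t)=Q(t)-c\prod_{j\ne i}(t-j)$ and comparing $t^{s-2}$-coefficients shows the top coefficients $y^{(1)},\dots,y^{(s)}$ form an arithmetic progression with common difference $-c$, so 3-AP-freeness kills $c$ and forces every copy to be a base copy. (I checked the coefficient identity $y^{(i)}=d+c\bigl(\binom{s+1}{2}-i\bigr)$, the $(s-1)$-disjointness via the degree bound, and the range/counting bookkeeping; all are fine, and the constants depending on $s$ are absorbed into $e^{O(\sqrt{\log n})}$ as required.) What each approach buys: the paper's is maximally elementary (three linear equations, no interpolation) and stays closest to the original triangle construction, while yours is more systematic --- it gives a uniform formula for all $s$, uses the full symmetry of the $s$ links (even though only three terms of the AP are needed), and in fact shows the stronger statement that the base copies are the only copies, which immediately gives the bound $M^{s-2}|B|\le n^{s-1}$. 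One small shared implicit point: the lemma bounds all copies of $K_s^{(s-1)}$, while you count canonical ones; since every edge of $G$ meets $s-1$ distinct parts and $s\ge 3$, any copy has one vertex per part and is automatically canonical (the paper's proof makes the same implicit reduction), so this is harmless but worth a sentence.
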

	\begin{proof}
Take a set $B \subseteq [n/s]$, $|B| \geq n/e^{O\sqrt{\log n}}$, with no non-trivial solution to $y_1 + y_2 = 2y_3$, $y_1,y_2,y_3 \in B$. Take pairwise-disjoint sets $V_1,\dots,V_s$ of size $n$ each, and identify each $V_i$ with $[n]$. For each $x_1,\dots,x_{s-2} \in [n/s]$ and $y \in B$, add to $G$ a copy $K_{x_1,\dots,x_{s-2},y}$ of $K_s^{(s-1)}$ on the vertices
$$
x_1 \in V_1, ~~~x_2 \in V_2, ~~~\dots ~~~x_{s-2} \in V_{s-2}, ~~~y+\sum^{s-2}_{i=1}x_i \in V_{s-1}, ~~~2y+\sum^{s-2}_{i=1}x_i \in V_s.
$$
It is easy to see that these copies are $(s-1)$-disjoint, because fixing any $s-1$ of the $s$ coordinates allows to solve for $x_1,\dots,x_{s-2},y$. Also, the number of copies thus placed is $(n/s)^{s-2} \cdot |B| \geq n^{s-1}/e^{O\sqrt{\log n}}$. Let us show that there are no other copies of $K_s^{(s-1)}$ in $G$. This would imply that the total number of copies of $K_s^{(s-1)}$ in $G$ is $(n/s)^{s-2} \cdot |B| \leq n^{s-1}$. So suppose that $v_1 \in V_1,\dots,v_s \in V_s$ form a copy of $K_s^{(s-1)}$. Let $x^{(i)} = (x^{(i)}_1,\dots,x^{(i)}_{s-2}) \in [n/s]^{s-2}$ and $y_i \in B$, $i = 1,2,3$, be such that $\{v_2,\dots,v_s\} \in K_{x^{(1)},y_1}$, $\{v_1,\dots,v_{s-1}\} \in K_{x^{(2)},y_2}$ and $\{v_1,\dots,v_{s-2},v_s\} \in K_{x^{(3)},y_3}$. Then $x^{(2)}_1 = x^{(3)}_1 = v_1$ and
		\begin{equation}\label{eq:x}
		x^{(1)}_j = x^{(2)}_j = x^{(3)}_j = v_j \text{ for every } 2 \leq j \leq s-2.
		\end{equation}
		Also, $v_s - v_{s-1} = y_1$, $v_{s-1} - v_1 = x^{(2)}_2 + \dots + x^{(2)}_{s-2} + y_2$ and $v_s - v_1 = x^{(3)}_2 + \dots + x^{(3)}_{s-2} + 2y_3$. Combining these three equations and using \eqref{eq:x}, we get $y_1 + y_2 = 2y_3$, and so $y_1 = y_2 = y_3 =: y$ by our choice of $B$. Also, $x^{(1)}_1 = v_{s-1} - (v_2 + \dots + v_{s-2} +  y) = x^{(2)}_1$. So $x^{(1)} = x^{(2)} = x^{(3)}$.
	\end{proof}
	We now prove two lemmas, Lemmas \ref{lem:induced cycle} and \ref{lem:simplex}, which imply Lemmas \ref{lemma1} and \ref{lemma2}, respectively.
	Recall that for a $k$-graph $F$ and $2 \leq \ell \leq k$, the {\em $\ell$-shadow} of $F$, denoted $\partial_{\ell}F$, is the $\ell$-graph consisting of all
	$f \in \binom{V(F)}{\ell}$ such that there is $e \in E(F)$ with $f \subseteq e$.
	\begin{lemma}
\label{lem:induced cycle}
Let $k \geq 2$, let $F$ be a core $k$-graph, and suppose that $\partial_2 F$ has an induced cycle of length at least $4$. 
Then for every large enough $n$ there is a $k$-graph $H$ with $v(F) \cdot n$ vertices which is homomorphic to $F$, has a collection of $n^k/e^{O(\sqrt{\log n})} = n^{k-o(1)}$ edge-disjoint copies of $F$, but has at most $n^{v(F) - 1}$ copies of $F$ altogether.
	\end{lemma}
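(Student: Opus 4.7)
The plan is to combine the Behrend-type graph construction from Lemma \ref{lem:graph_RS} with the extension procedure of Lemma \ref{lem:extension}, using the core property of $F$ (via Claim \ref{claim:core}) to control the total number of copies. Label $V(F) = [s]$ with $s = v(F)$, and $I = [s']$ where $s' = |I|$. Put $S := (\partial_2 F)[I]$, which contains a cycle by hypothesis. Apply Lemma \ref{lem:graph_RS} to $S$ to obtain, for large $n$, an $s'$-partite graph $G_0$ on disjoint sides $V_1, \ldots, V_{s'}$ of size $n$ each, together with a family $\mathcal{S}$ of $n^2/e^{O(\sqrt{\log n})}$ $2$-disjoint canonical copies of $S$ in $G_0$, with at most $n^{s'-1}$ canonical copies of $S$ altogether. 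Add further disjoint sides $V_{s'+1}, \ldots, V_s$ of size $n$ each and apply Lemma \ref{lem:extension} to $\mathcal{S}$ with parameters $\ell = 2$ and $r = s - s'$. The hypotheses $k \geq \ell$ and $r \geq k - \ell$ both hold: any edge $e \in E(F)$ satisfies $|e \cap I| \leq 2$, so $|e \setminus I| \geq k - 2$, forcing $s - s' \geq k - 2$. This yields a family $\mathcal{F} \subseteq V_1 \times \cdots \times V_s$ of size $n^k/e^{O(\sqrt{\log n})}$. Define $H$ by placing, for each $f \in \mathcal{F}$ and each $e \in E(F)$, the hyperedge $\{f(i) : i \in e\}$. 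The map $V_i \mapsto i$ is then a homomorphism $H \to F$, and each $f \in \mathcal{F}$ spans a canonical copy of $F$ in $H$.

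Edge-disjointness of these $|\mathcal{F}|$ copies follows immediately from Item 3 of Lemma \ref{lem:extension}: if distinct $f_1, f_2 \in \mathcal{F}$ shared a hyperedge, they would agree on all $k$ coordinates of some $e \in E(F)$, in particular on the at least $k - 2$ coordinates of $e$ lying in $[s'+1, s]$, contradicting the bound of $k - 3$ coordinates of agreement outside $I$ guaranteed by Item 3.

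The delicate step — and the part I expect to require the most care — is bounding the total number of copies of $F$ in $H$. Since $H$ is homomorphic to the core $F$, Claim \ref{claim:core} forces every copy of $F$ in $H$ to be canonical, of the form $(v_1, \ldots, v_s)$ with $v_i \in V_i$ and $v_i$ playing role $i$. The key observation is that the $I$-projection $(v_1, \ldots, v_{s'})$ must itself be a canonical copy of $S$ in $G_0$: for each $\{i, j\} \in E(S)$, pick $e \in E(F)$ with $\{i, j\} \subseteq e$; the hyperedge $\{v_i : i \in e\}$ of $H$ arises from some $f \in \mathcal{F}$, so its $I$-restriction $f|_I$ is an element of $\mathcal{S}$ passing through both $v_i$ and $v_j$, whence $\{v_i, v_j\} \in E(G_0)$. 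Consequently the number of canonical copies of $F$ in $H$ is at most (canonical copies of $S$ in $G_0$) $\cdot\, n^{s - s'} \leq n^{s'-1} \cdot n^{s - s'} = n^{v(F)-1}$, as required. The only real obstacle here is the precise interplay between the two hypotheses: the core property funnels every copy of $F$ through a canonical projection, while the condition $|e \cap I| \leq 2$ ensures this projection lands inside the (sparse) graph $G_0$ rather than producing genuinely new $S$-edges.
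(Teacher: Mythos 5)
Your proposal is correct and follows essentially the same route as the paper's proof: the Behrend-type construction of Lemma \ref{lem:graph_RS} for $S=(\partial_2F)[I]$, extension via Lemma \ref{lem:extension} with $\ell=2$ (checking $r\geq k-2$ exactly as the paper does), edge-disjointness from Item 3, and the count of copies via Claim \ref{claim:core} together with the projection of every canonical copy of $F$ onto a canonical copy of $S$ in the base graph. The details you highlight as delicate are precisely the ones the paper also spells out, and your treatment of them is sound.
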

	\begin{proof}
		It will be convenient to write $|V(F)| = t+r$ and assume that $V(F) = [t+r]$, where $(1,2,\dots,t,1)$ is an induced cycle in $\partial_2 F$ and $t \geq 4$. It follows that $|e \cap \{1,\dots,t\}| \leq 2$ for every $e \in E(F)$. 
		Take disjoint sets $V_1,\dots,V_{t+r}$ of size $n$ each.
		Let $G$ be the $t$-partite graph with sides $V_1,\dots,V_t$ given by Lemma \ref{lem:graph_RS}.
		Let $\mathcal{S}$ be a collection of $n^2/e^{O(\sqrt{\log n})}$ $2$-disjoint canonical copies of $C_t$ in $G$.
	    Apply Lemma \ref{lem:extension} to\footnote{Strictly speaking, we apply Lemma \ref{lem:extension} to the vertex-sets of the copies of $C_t$ in $\mathcal{S}$.} $\mathcal{S}$ with $s=t$ and $\ell = 2$ to obtain a family $\mathcal{F} \subseteq V_1 \times \dots \times V_{t+r}$ satisfying Items 1-3 in that lemma.
	    Note that $r \geq k-2 = k-\ell$, because each edge of $F$ contains at most two vertices from $\{1,\dots,t\}$ and hence at least $k-2$ vertices from $\{t+1,\dots,t+r\}$. Therefore, the conditions of Lemma \ref{lem:extension} are satisfied.
	    Define the hypergraph $H$ by placing a canonical copy of $F$ on each $F' \in \mathcal{F}$. We claim that these copies of $F$ are edge-disjoint. Indeed, suppose by contradiction that the copies on $F_1,F_2 \in \mathcal{F}$ share an edge $e$. Then $|F_1 \cap F_2| \geq k$. By Item 3 of Lemma \ref{lem:extension}, we have $\#\{t+1 \leq i \leq t+r : F_1(i) = F_2(i)\} \leq k-3$. This implies that $\#\{1 \leq i \leq t : e \cap V_i \neq \emptyset\} \geq 3$. But this means that in $F$ there is an edge which intersects $\{1,\dots,t\}$ in at least $3$ vertices, a contradiction. So the $F$-copies in $\mathcal{F}$ are indeed edge-disjoint. Their number is $|\mathcal{F}| \geq \Omega(|\mathcal{S}|n^{k-2}) \geq n^k/e^{O(\sqrt{\log n})}$, by Item 2 of Lemma \ref{lem:extension}.

To complete the proof, it remains to show that $H$ has at most $n^{t+r-1}$ copies of $F$.
Observe that $H$ is homomorphic to $F$; indeed, the map $\varphi$ which sends $V_j \mapsto j$, $j = 1,\dots,t+r$, is such a homomorphism.
Let $F^*$ be a copy of $F$ in $H$.
Since $F$ is a core and $\varphi$ is a homomorphism from $H$ to $F$, we can apply Claim \ref{claim:core} to conclude that $F^*$
must have the form $v_1,\dots,v_{t+r}$, with $v_i \in V_i$ playing the role of $i$ for each $i = 1,\dots,t+r$. We claim that $v_1,\dots,v_t$ form a canonical copy of $C_t$ {\bf in\footnote{Note that the subgraph of $\partial_2(F^*)$ induced by $v_1,\dots,v_t$ is a canonical copy of $C_t$ in the $2$-shadow of $H$. The first key point is that this copy of $C_t$ must appear in $G$. Also, note that this fact is trivial if $F^*$ is one of the canonical copies of $F$ we placed in $H$ when defining it. The second key point is that this holds for every copy $F^*$ of $F$ in $H$.}} $G$. To see this, fix any $1 \leq i \leq t$ and let us show that $\{v_i,v_{i+1}\} \in E(G)$, with indices taken modulo $t$. Since $\{i,i+1\}$ is an edge of $\partial_2 F$, there must be an edge $e \in E(F)$ containing $i,i+1$. Then $\{v_a : a \in e\} \in E(F^*) \subseteq E(H) = \bigcup_{F' \in \mathcal{F}}{E(F')}$. Let $F' \in \mathcal{F}$ such that
$\{v_a : a \in e\} \in E(F')$. By Item 1 of Lemma \ref{lem:extension}, we have
$S' := F'|_{V_1 \times \dots \times V_t} \in \mathcal{S}$.
Now, $S'$ is the vertex set of a canonical copy of $C_t$ in $G$, and hence $\{v_i,v_{i+1}\} \in E(G)$, as required. This proves our claim that $v_1,\dots,v_t$ form a canonical copy of $C_t$ in $G$. Summarizing, every copy of $F$ in $H$ contains the vertices of a canonical copy of $C_t$ in $G$. By the guarantees of Lemma \ref{lem:graph_RS}, the number of canonical copies of $C_t$ in $G$ is at most $n^{t-1}$. Hence, the number of copies of $F$ in $H$ is at most $n^{t-1} \cdot n^{r} = n^{t+r-1}$, \nolinebreak as \nolinebreak required.
	\end{proof}
	\begin{lemma}
		\label{lem:simplex}
		Let $k \geq 2$, let $F$ be a core $k$-graph and suppose that there are $3 \leq s \leq k + 1$ and a set
		$I \subseteq V(F)$ such that $(\partial_{s-1} F)[I] \cong K_s^{(s-1)}$ and $|e \cap I| \leq s-1$ for every $e \in E(F)$.
		Then for every large enough $n$ there is a $k$-graph $H$ with $v(F) \cdot n$ vertices which is homomorphic to $F$, has a collection of $n^k/e^{O(\sqrt{\log n})} = n^{k-o(1)}$ edge-disjoint copies of $F$, but has at most $n^{v(F) - 1}$ copies of $F$ altogether.
	\end{lemma}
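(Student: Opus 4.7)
The plan is to mimic the proof of Lemma \ref{lem:induced cycle} almost verbatim, replacing the graph-theoretic input (Lemma \ref{lem:graph_RS}) by its hypergraph analogue (Lemma \ref{lem:k-simplex_RS}) and taking $\ell = s-1$ (instead of $\ell = 2$) in the application of Lemma \ref{lem:extension}. Write $|I| = s$, $|V(F)| = s+r$, $I = [s]$, $V(F) = [s+r]$, and set $T := (\partial_{s-1}F)[I] \cong K_s^{(s-1)}$. A quick sanity check: since $|e \cap I| \leq s-1$ for every $e \in E(F)$, each of the $s$ hyperedges of $T$ must be witnessed by a distinct edge of $F$, each of which contributes $k-s+1$ vertices outside $I$; thus $v(F) \geq s + (k-s+1) = k+1$, giving $r \geq k-s+1 = k-\ell$, so the hypotheses of Lemma \ref{lem:extension} are satisfied.

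Next, take pairwise-disjoint $V_1,\dots,V_{s+r}$ of size $n$ each, apply Lemma \ref{lem:k-simplex_RS} to $V_1,\dots,V_s$ to obtain an $s$-partite $(s-1)$-graph $G$ together with a family $\mathcal{T}$ of $n^{s-1}/e^{O(\sqrt{\log n})}$ pairwise $(s-1)$-disjoint canonical copies of $K_s^{(s-1)}$, and feed $\mathcal{T}$ (viewed as a family of $s$-tuples in $V_1 \times \dots \times V_s$) into Lemma \ref{lem:extension} with $\ell = s-1$. This yields $\mathcal{F} \subseteq V_1 \times \dots \times V_{s+r}$ with $|\mathcal{F}| \geq \Omega(|\mathcal{T}|\cdot n^{k-s+1}) \geq n^k/e^{O(\sqrt{\log n})}$. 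Construct $H$ by placing a canonical copy of $F$ on the vertex set of each $F' \in \mathcal{F}$. For edge-disjointness, suppose the canonical copies on $F_1,F_2 \in \mathcal{F}$ share a $k$-edge; then $|F_1 \cap F_2| \geq k$ and Item 3 of Lemma \ref{lem:extension} forces at most $k-s$ of the coordinates in $V_{s+1},\dots,V_{s+r}$ to agree, hence the shared edge uses at least $s$ vertices of $V_1 \cup \dots \cup V_s$. But then the corresponding edge of $F$ meets $I$ in at least $s$ vertices, contradicting $|e \cap I| \leq s-1$.

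For the upper bound, note that $V_j \mapsto j$ is a homomorphism $H \to F$, so by Claim \ref{claim:core} every copy $F^*$ of $F$ in $H$ is canonical: its vertices are $v_1,\dots,v_{s+r}$ with $v_i \in V_i$ playing the role of $i$. The crucial step, entirely analogous to the cycle case, is to show that $(v_1,\dots,v_s)$ forms a canonical copy of $T \cong K_s^{(s-1)}$ in $G$. Fix any $(s-1)$-subset $f \subseteq [s]$; since $f \in E(T) \subseteq E(\partial_{s-1}F)$, there exists $e \in E(F)$ with $f \subseteq e$. Then $\{v_a : a \in e\} \in E(F^*) \subseteq E(H)$, so this edge was placed inside some $F' \in \mathcal{F}$, and by Item 1 of Lemma \ref{lem:extension} the restriction $F'|_{V_1 \times \dots \times V_s}$ lies in $\mathcal{T}$, i.e., spans a canonical copy of $K_s^{(s-1)}$ in $G$; therefore $\{v_a : a \in f\} \in E(G)$. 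Hence each copy of $F$ in $H$ determines a canonical copy of $K_s^{(s-1)}$ in $G$, and Lemma \ref{lem:k-simplex_RS} bounds these by $n^{s-1}$. Multiplying by $n^r$ choices for the vertices outside $I$, the total is at most $n^{s+r-1} = n^{v(F)-1}$.

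The only nontrivial point is the final ``$(v_1,\dots,v_s)$ spans a canonical copy of $K_s^{(s-1)}$ in $G$'' step, which is where the full strength of the hypothesis $(\partial_{s-1}F)[I] \cong K_s^{(s-1)}$ is consumed: one needs \emph{every} $(s-1)$-subset of $I$ to be realized as $e \cap I$ for some $e \in E(F)$, and then to trace that $e$ through the construction. Everything else is a direct parameter-adjustment of the cycle-case argument.
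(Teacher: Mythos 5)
Your proposal is correct and follows essentially the same route as the paper's proof: Lemma \ref{lem:k-simplex_RS} as the base construction, Lemma \ref{lem:extension} with $\ell = s-1$, edge-disjointness via Item 3 together with the hypothesis $|e\cap I|\leq s-1$, and the counting bound via Claim \ref{claim:core} plus tracing each edge of a copy of $F$ back to some $F'\in\mathcal{F}$ to exhibit a copy of $K_s^{(s-1)}$ in $G$. Your explicit verification that $r\geq k-\ell$ (which the paper states only in the cycle case) is a correct and welcome addition.
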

	\begin{proof}
		The proof is very similar to that of Lemma \ref{lem:induced cycle}. Assume that $I = [s]$, $V(F) = [s+r]$. Take disjoint sets $V_1,\dots,V_{s+r}$ of size $n$ each. Let $G$ be the $s$-partite $(s-1)$-graph with sides $V_1,\dots,V_s$ given by Lemma \ref{lem:k-simplex_RS}. Let $\mathcal{S}$ be a collection of $n^{s-1}/e^{O(\sqrt{\log n})}$ $(s-1)$-disjoint copies of $K_{s}^{(s-1)}$ in $G$. Apply Lemma \ref{lem:extension} to $\mathcal{S}$ with $\ell = s-1$ to obtain a family $\mathcal{F} \subseteq V_1 \times \dots \times V_{s+r}$ satisfying Items 1-3 in that lemma. Define the hypergraph $H$ by placing a canonical copy of $F$ on each $F' \in \mathcal{F}$. These copies of $F$ are edge-disjoint. Indeed, suppose by contradiction that the copies on $F_1,F_2 \in \mathcal{F}$ share an edge $e$. Then $|F_1 \cap F_2| \geq k$, and hence
		$\#\{s+1 \leq i \leq s+r : F_1(i) = F_2(i)\} \leq k-\ell-1 = k-s$ by Item 3 of Lemma \ref{lem:extension}. But then $\#\{1 \leq i \leq s : e \cap V_i \neq \emptyset\} = s$, meaning that there is an edge of $F$ which contains $I = [s]$, a contradiction to the assumption of the lemma. So the $F$-copies in $\mathcal{F}$ are indeed edge-disjoint. Also, $|\mathcal{F}| \geq \Omega(|\mathcal{S}|n^{k-s+1}) \geq n^k/e^{O(\sqrt{\log n})}$, using Item 2 of Lemma \ref{lem:extension}.
		
		The map $V_j \mapsto j$, $j = 1,\dots,s+r$ is a homomorphism from $H$ to $F$. Let us bound the number of copies of $F$ in $H$. By Claim \ref{claim:core}, every copy $F^*$ of $F$ must be of the form $v_1,\dots,v_{s+r}$, with $v_i \in V_i$ playing the role of $i$ for each $i = 1,\dots,s+r$.
		We claim that $v_1,\dots,v_s$ span a copy of $K_s^{(s-1)}$ in $G$.
		So let $J \in \binom{[s]}{s-1}$. Since $(\partial_{s-1} F)[I] \cong K_s^{(s-1)}$, there is an edge $e \in E(F)$ with $J \subseteq e$. Since $F^*$ is a canonical copy of $F$, we have $\{v_i : i \in e\} \in E(F^*) \subseteq E(H) = \bigcup_{F' \in \mathcal{F}}{E(F')}$. Let $F' \in \mathcal{F}$ be such that
		$\{v_i : i \in e\} \in E(F')$. By Item 1 of Lemma \ref{lem:extension}, we have
		$S' := F'|_{V_1 \times \dots \times V_s} \in \mathcal{S}$.
		Now, $S'$ is a canonical copy of $K_s^{(s-1)}$ in $G$, and hence $\{v_i : i \in J\} \in E(G)$, as required. So we see that every copy of $F$ in $H$ contains the vertices of a copy of $K_s^{(s-1)}$ in $G$. By the guarantees of Lemma \ref{lem:graph_RS}, $G$ has at most $n^{s-1}$ copies of $K_s^{(s-1)}$. Hence, $H$ has at most $n^{s-1} \cdot n^{r} = n^{s+r-1}$ copies of $F$, as required.
	\end{proof}
	\noindent
	Observe that Lemma \ref{lemma1} follows by combining Lemmas \ref{lem:blowup} and \ref{lem:induced cycle}. Let us prove Lemma \ref{lemma2}.
	\begin{proof}[Proof of Lemma \ref{lemma2}]
		Let $X$ be a clique of size $k+1$ in $\partial_2 F$. Let $I$ be a smallest subset of $X$ which is not contained in an edge of $F$. Note that $I$ is well-defined (because $X$ itself is not contained in any edge of $F$, as $|X| = k+1$). Also, $|I| \geq 3$ because every pair of vertices in $X$ is contained in some edge, as $X$ is a clique in $\partial_2F$. Put $s = |I|$. Then $(\partial_{s-1}F)[I] \cong K_s^{(s-1)}$ and $|e \cap I| \leq s-1$ for every $e \in E(F)$, by the choice of $I$. Now the assertion of Lemma \ref{lemma2} follows by combining Lemmas \ref{lem:blowup} and \ref{lem:simplex}.
	\end{proof}

\end{document}